\documentclass[aps,reprint,superscriptaddress,pra]{revtex4-2}
\usepackage{amsmath, amssymb, amsthm}
\usepackage{booktabs}
\usepackage{float}
\usepackage{microtype}
\usepackage{parskip}
\usepackage{array}
\usepackage[colorlinks=true, linkcolor=blue, citecolor=blue, urlcolor=blue]{hyperref}
\usepackage{tikz}
\usepackage{graphicx}
\usepackage[section]{placeins}
\usetikzlibrary{positioning}

\newtheorem{definition}{Definition}

\newtheorem{proposition}{Proposition}

\begin{document}

\title{\textbf{Financial Contagion Networks as Annealing-Ready Ising Systems}\\[0.3em]
\large Cascades, Bailout Optimization, and Susceptibility}

\author{Abhinav Tomar}
\altaffiliation{These authors contributed equally to this work.}
\affiliation{The Institute of Mathematical Sciences (IMSc), C.I.T Campus, Taramani, Chennai 600113, India}
\affiliation{QCAR Group, The Institute of Mathematical Sciences (IMSc), Chennai 600113, India}
\author{Lakshya Nagpal}
\altaffiliation{These authors contributed equally to this work.}
\affiliation{The Institute of Mathematical Sciences (IMSc), C.I.T Campus, Taramani, Chennai 600113, India}
\affiliation{QCAR Group, The Institute of Mathematical Sciences (IMSc), Chennai 600113, India}
\affiliation{Pecslab Research}
\author{Vikas Chauhan}
\affiliation{Department of Physics, Ramjas College, University of Delhi, Delhi 110007, India}
\affiliation{QCAR Group, The Institute of Mathematical Sciences (IMSc), Chennai 600113, India}
\author{S.R. Hassan}
\affiliation{The Institute of Mathematical Sciences (IMSc), C.I.T Campus, Taramani, Chennai 600113, India}
\affiliation{QCAR Group, The Institute of Mathematical Sciences (IMSc), Chennai 600113, India}
\affiliation{Homi Bhabha National Institute, Anushakti Nagar, Mumbai, Maharashtra 400094}

\begin{abstract}
Interconnected financial systems are vulnerable to cascading failures arising from cross-holdings and nonlinear contagion, making the analysis and mitigation of systemic risk a challenging computational problem. In this work, we develop a unified optimization framework for financial network analysis based on Ising models and Quadratic Unconstrained Binary Optimization (QUBO). Starting from the Elliott--Golub--Jackson financial network model, we extend equilibrium valuation to incorporate threshold-induced failures, formulate the Maximum Cascade Failure Problem, and derive an equivalent QUBO representation. We then formulate the Optimal Bailout Allocation Problem as a controlled Ising model and transform the resulting bi-level optimization into a single joint QUBO that simultaneously determines equilibrium failures and optimal interventions under budget constraints. To characterize the influence of individual institutions, we introduce bailout susceptibility as a response-based measure of systemic importance and develop a susceptibility-driven greedy intervention strategy. Numerical simulations demonstrate equilibrium valuation, worst-case cascade identification, optimal bailout allocation, and susceptibility analysis on financial networks of varying sizes. 
Beyond optimization, the Ising representation provides a general statistical-mechanical framework for analyzing financial contagion, enabling the application of response theory, Monte Carlo methods, and other techniques developed for interacting spin systems.
The proposed framework establishes a unified approach for systemic risk analysis that is compatible with classical annealing, quantum-inspired optimization, and emerging quantum annealing technologies.
\end{abstract}

\maketitle

\section{Introduction}

Modern financial systems are highly interconnected through equity ownership, debt obligations, interbank lending, derivative contracts, and other forms of financial exposure.~\cite{eisenberg2001systemic,
gai2010contagion,
acemoglu2015systemic,
glasserman2016contagion,
haldane2011systemic,
battiston2012debtrank,
bardoscia2017pathways,
caccioli2014stability,
may2010systemic} While these interconnections improve liquidity and the allocation of capital, they also create channels through which financial distress can propagate across institutions. Consequently, the failure of a single institution may trigger a sequence of subsequent failures, amplifying a localized shock into a systemic crisis. The global financial crisis of 2008 demonstrated that systemic risk depends not only on the financial health of individual institutions but also on the structure of the network that connects them.
\cite{battiston2012debtrank,
haldane2011systemic,
poledna2015el,
battiston2016complexity,
bardoscia2017pathways} Understanding how shocks propagate through such networks and how systemic crises can be mitigated has therefore become a central problem in financial economics, network science, and risk management. \cite{newman2010networks,
barabasi2016network,
watts2002simple,
granovetter1978threshold,
acemoglu2012aggregate}

Several mathematical frameworks have been developed to study financial contagion and systemic risk. Early network models focused on interbank liabilities and clearing mechanisms, while subsequent approaches incorporated cross-holdings, default cascades, and measures of systemic importance. Representative examples include the clearing model of Eisenberg and Noe, the financial network model of Elliott, Golub, and Jackson (EGJ), DebtRank, and related network-based stress-testing methodologies~\cite{eisenberg2001systemic,
rogers2013failure,
elliott2014financial,
gai2010contagion,
acemoglu2015systemic,
glasserman2016contagion,
battiston2012debtrank,
battiston2016complexity,
bardoscia2017pathways,
caccioli2014stability,
poledna2015el}. Among these, the EGJ model provides a particularly convenient starting point because it describes financial institutions as interconnected through cross-holdings while simultaneously owning primitive assets whose values are determined outside the network. The market values of all institutions are obtained from a system of coupled linear equations, naturally capturing the amplification of shocks through direct and indirect ownership chains.

Although equilibrium valuation can be computed efficiently, the introduction of financial distress fundamentally changes the problem. Institutions become vulnerable when their market values fall below critical solvency thresholds, triggering additional losses arising from liquidation costs, fire-sale effects, funding shortages, or loss of market confidence. These threshold-induced losses introduce nonlinear feedback into the valuation equations, transforming the original linear equilibrium problem into a nonlinear fixed-point system capable of generating cascading failures. Consequently, relatively small asset shocks may produce disproportionately large systemic responses.

The emergence of nonlinear contagion naturally leads to optimization problems of direct relevance to financial regulation. Rather than asking only whether a financial network is stable, regulators seek to identify the largest cascade that can arise from an initial disturbance and the most effective allocation of limited bailout resources to suppress such cascades. These problems involve optimization over an exponentially large space of possible failure and intervention configurations, making exact solution computationally impractical for large financial systems.

Quadratic Unconstrained Binary Optimization (QUBO) provides a natural framework for addressing such combinatorial problems. QUBO formulations are equivalent to Ising Hamiltonians and therefore establish a direct connection between optimization, statistical mechanics, and annealing-based computation. The same mathematical formulation can be solved using classical simulated annealing, simulated quantum annealing, quantum-inspired optimizers, or quantum annealing hardware, making QUBO an attractive framework for large-scale optimization problems~\cite{kirkpatrick1983optimization,kadowaki1998quantum,johnson2011quantum}

In this work, we develop a unified optimization framework for systemic risk analysis that combines equilibrium valuation, contagion dynamics, combinatorial optimization, and statistical-physics methods within a common mathematical formulation. We first extend the EGJ equilibrium valuation model by incorporating threshold-induced failures, thereby transforming the linear valuation equations into a nonlinear fixed-point problem describing cascading contagion. Building upon this formulation, we introduce the \emph{Maximum Cascade Failure Problem} (MCFP), formulate it as a QUBO optimization problem, and derive an equivalent Ising representation whose minimum-energy configurations represent feasible contagion cascades.

We then consider the complementary problem of regulatory intervention by formulating the \emph{Optimal Bailout Allocation Problem} (OBAP). Bailout actions are introduced as external control fields acting on an Ising model of financial contagion, allowing the original bi-level optimization problem to be reformulated as a single joint QUBO that simultaneously determines equilibrium failures and optimal interventions under budget constraints.

Finally, motivated by linear-response theory, we introduce the concept of \emph{bailout susceptibility} as a quantitative measure of systemic importance. The resulting susceptibility matrix characterizes how interventions applied to one institution influence the stability of others and naturally leads to a response-based greedy bailout strategy that provides an efficient approximation to the full joint optimization problem.

The present work lies at the intersection of financial network theory, combinatorial optimization, and statistical physics. By establishing an explicit mapping between financial contagion and Ising optimization, it provides a unified computational framework for equilibrium valuation, worst-case cascade analysis, and optimal regulatory intervention. Although the numerical examples employ annealing-based optimization methods, the mathematical formulations are independent of any specific computational platform and are equally applicable to classical optimizers, quantum-inspired algorithms, quantum annealers, and future gate-based ~\cite{farhi2014qaoa} quantum optimization methods.

While previous work has established the dynamics of threshold financial networks and their computational complexity, no existing formulation places these systems within the framework of equilibrium statistical mechanics. By constructing an equivalent Ising representation, we obtain not only an optimization model but also access to a broad range of analytical tools developed for interacting spin systems. These include annealing algorithms, response theory, correlation analysis, and other techniques that become directly applicable to financial contagion.

The remainder of this paper is organized as follows. Section~II develops the financial network valuation model together with threshold-induced contagion dynamics. Section~III formulates the Maximum Cascade Failure Problem, establishes its computational complexity, and derives its QUBO representation. Section~IV introduces the controlled Ising formulation of optimal bailout allocation and the corresponding joint QUBO model. Section~V develops the bailout susceptibility framework and presents a response-based greedy intervention strategy. Section~VI presents numerical results, and Section~VII concludes the paper.

\section{Financial Network Valuation and Threshold Contagion}

We consider an interconnected financial system consisting of $n$ financial institutions and $m$ primitive assets. The framework follows the financial network model of Elliott, Golub, and Jackson~\cite{elliott2014financial}, which combines direct asset ownership with cross-holdings between institutions. Our objective is to extend this equilibrium valuation model by incorporating threshold-induced failures that generate nonlinear contagion dynamics.

Let
\[
\mathcal I=\{1,2,\ldots,n\}
\]
denote the set of financial institutions and
\[
\mathcal A=\{1,2,\ldots,m\}
\]
the set of primitive assets. The market prices of the primitive assets are collected into the vector
\begin{equation}
p=(p_1,p_2,\ldots,p_m)^{\top}\in\mathbb{R}_{\ge0}^{m}.
\end{equation}

The ownership of primitive assets is described by the non-negative matrix
\begin{equation}
D\in\mathbb{R}_{\ge0}^{n\times m},
\end{equation}
where $D_{ik}$ denotes the fraction of asset $k$ owned by institution $i$. Since the total ownership of each asset cannot exceed its full value,
\begin{equation}
\sum_{i=1}^{n}D_{ik}\le1,
\qquad
k=1,\ldots,m.
\end{equation}

Financial institutions are further connected through equity cross-holdings represented by the matrix
\begin{equation}
C\in\mathbb{R}_{\ge0}^{n\times n},
\end{equation}
where $C_{ij}$ denotes the fraction of institution $j$ owned by institution $i$. Throughout this work we assume
\begin{equation}
C_{ii}=0,
\qquad
\sum_{i=1}^{n}C_{ij}\le1,
\end{equation}
excluding self-ownership and ensuring that no institution is owned beyond its total equity.

The remaining ownership belongs to investors outside the financial network. Defining the outside ownership fractions by
\begin{equation}
\hat C_{jj}
=
1-
\sum_{i=1}^{n}C_{ij},
\end{equation}
the corresponding diagonal matrix is
\begin{equation}
\hat C
=
\mathrm{diag}
(\hat C_{11},\ldots,\hat C_{nn}).
\end{equation}
We assume
\begin{equation}
\hat C_{jj}>0,
\qquad
j=1,\ldots,n,
\end{equation}
so that every institution possesses a non-zero fraction of outside ownership.

The equilibrium equity value of institution $i$ consists of the value of the primitive assets that it directly owns together with the value inherited through its ownership of other financial institutions. Writing
\[
V=(V_1,\ldots,V_n)^{\top},
\]
the equilibrium condition becomes
\begin{equation}
V=Dp+CV.
\label{eq:equity_matrix_new}
\end{equation}
Since $\rho(C)<1$, the matrix $(I-C)$ is invertible and Eq.~(\ref{eq:equity_matrix_new}) has the unique solution
\begin{equation}
V=(I-C)^{-1}Dp.
\label{eq:equity_solution_new}
\end{equation}

The quantity relevant for systemic risk is the market value visible to investors outside the network,
\begin{equation}
v=\hat C V,
\end{equation}
which may be written as
\begin{equation}
v
=
\hat C(I-C)^{-1}Dp.
\end{equation}
Introducing the dependency matrix
\begin{equation}
A=\hat C(I-C)^{-1},
\label{eq:dependency_matrix}
\end{equation}
the equilibrium valuation assumes the compact form
\begin{equation}
v=ADp.
\label{eq:linear_equilibrium}
\end{equation}

The matrix $A$ plays a central role throughout this work~\cite{elliott2014financial,
acemoglu2012aggregate,
glasserman2016contagion}. It incorporates not only direct ownership relations but also all indirect ownership chains generated through repeated cross-holdings. Consequently, every element $A_{ij}$ measures the effective influence of institution $j$ on the market value of institution $i$.

Equation~(\ref{eq:linear_equilibrium}) describes a stable linear valuation model in which equilibrium market values are obtained through a single matrix inversion. Financial contagion emerges only after introducing threshold-induced failures.

To model financial distress, we associate with each institution a critical market value
\begin{equation}
v_i^{c}>0,
\end{equation}
below which the institution is regarded as insolvent,
\begin{equation}
v_i<v_i^{c}.
\label{eq:failure_condition_new}
\end{equation}
Failure generates an additional loss
$\beta_i>0$,
representing liquidation costs, funding shortages, fire-sale effects, or loss of market confidence. \cite{orus2019forecasting}
Using the Heaviside step function,
\begin{equation}
\Theta(x)=
\begin{cases}
1,&x\ge0,\\
0,&x<0,
\end{cases}
\end{equation}
the failure penalty is written as
\begin{equation}
b_i(v_i)
=
\beta_i
\left[
1-\Theta(v_i-v_i^{c})
\right].
\end{equation}
Collecting all penalties into the vector
\[
b(v)=\bigl(b_1,\ldots,b_n\bigr)^{\top},
\]
the equilibrium valuation becomes
\begin{equation}
v
=
A\left(Dp-b(v)\right).
\label{eq:nonlinear_equilibrium_new}
\end{equation}

Equation~(\ref{eq:nonlinear_equilibrium_new}) is a nonlinear fixed-point equation because the market values determine the failure penalties while the penalties simultaneously modify the market values. The introduction of threshold effects therefore transforms the linear valuation model into a nonlinear contagion problem capable of generating multiple self-consistent equilibria.

Starting from the linear equilibrium
\[
v^{(0)}=ADp,
\]
an initial shock triggers a sequence of failure sets
\[
S^{(0)}
\subseteq
S^{(1)}
\subseteq
S^{(2)}
\subseteq
\cdots,
\]
where the market values evolve according to
\begin{equation}
v_i^{(t)}
=
v_i^{(0)}
-
\sum_{j\in S^{(t-1)}}A_{ij}\beta_j,
\label{eq:cascade_iteration_new}
\end{equation}
and newly failed institutions satisfy
\begin{equation}
S^{(t)}
=
\left\{
i\,\big|\,
v_i^{(t)}<v_i^{c}
\right\}.
\label{eq:cascade_set_new}
\end{equation}

Since the number of institutions is finite, the iteration converges after a finite number of steps to a self-consistent failure configuration. This nonlinear fixed-point structure provides the foundation for the optimization problems studied in the following sections, namely the determination of the largest possible cascade generated by an initial shock and the optimal allocation of limited bailout resources to suppress systemic contagion.

\section{Maximum Cascade Failure and QUBO Encoding}
\label{sec:mcfp-ising}

The nonlinear fixed-point formulation developed in the previous section determines the equilibrium failure configuration of a financial network once the failure mechanism has been specified. An important question in systemic risk analysis is to identify the most fragile configuration that is compatible with the network structure. This leads naturally to the Maximum Cascade Failure Problem (MCFP), which seeks the largest self-consistent failure configuration permitted by the financial dependency graph.

It is important to emphasize that the dependency matrix $A$ is \emph{not} an arbitrary adjacency matrix. Rather, $A=\hat C(I-C)^{-1}$ is obtained from the equilibrium valuation model of Elliott, Golub, and Jackson \cite{elliott2014financial} and therefore incorporates both direct and indirect cross-holding dependencies through all ownership paths. Consequently, the threshold graph induced by $A_{ij}>\theta$ represents equilibrium contagion dependencies derived from the financial network rather than an independently specified graph.

To describe the failure state of the network, we associate with each institution a binary variable
\begin{equation}
x_i=
\begin{cases}
1,&\text{institution $i$ fails},\\
0,&\text{institution $i$ survives},
\end{cases}
\qquad
i=1,\ldots,n.
\end{equation}
The total cascade size is therefore
\begin{equation}
F(x)=\sum_{i=1}^{n}x_i.
\end{equation}

The objective of the Maximum Cascade Failure Problem is to determine the binary configuration
\begin{equation}
x^\star
=
\arg\max_{x\in\{0,1\}^n}
F(x),
\label{eq:MCFP}
\end{equation}
subject to the logical constraints imposed by the financial dependency network.

The dependency matrix $A$ determines how failures propagate through the financial network. Whenever

\begin{equation}
A_{ij}>\theta,
\end{equation}

the failure of institution $j$ necessarily induces the failure of institution $i$. This contagion rule is represented by the logical implication

\begin{equation}
x_j=1
\Longrightarrow
x_i=1,
\label{eq:implication}
\end{equation}

where $x_i\in\{0,1\}$ denotes the failure state of institution $i$. Consequently, admissible failure configurations are precisely those that are closed under the contagion dynamics.

The resulting optimization problem seeks the self-consistent failure configuration containing the maximum number of failed institutions,

\begin{equation}
\max_{\mathbf{x}\in\{0,1\}^N}
\sum_{i=1}^{N}x_i,
\end{equation}

subject to the logical contagion constraints induced by Eq.~(\ref{eq:implication}). We refer to this optimization problem as the Maximum Cascade Failure Problem (MCFP).

The computational complexity of worst-case failure cascades in the Elliott--Golub--Jackson financial network model has already been established by Hemenway and Khanna-\cite{hemenway2016sensitivity,schuldenzucker2017,garey1979computers}. They showed, via a reduction from the Balanced Complete Bipartite Subgraph (BCBS) problem, that determining the maximum number of failures resulting from bounded asset-price shocks is NP-hard. Their result demonstrates that exhaustive search for worst-case cascades is computationally intractable in general, thereby motivating optimization-based approaches.

In the following section, we reformulate the MCFP as a Quadratic Unconstrained Binary Optimization (QUBO) problem, enabling the use of annealing-based optimization methods.

To favour large cascades, we introduce the reward functional
\begin{equation}
E_{\rm reward}
=
-w
\sum_{i=1}^{n}x_i,
\qquad
w \geq 0,
\end{equation}
whose minimization encourages configurations containing many failed institutions.

The contagion constraints are incorporated through quadratic penalty functions. The implication
\[
x_j\Rightarrow x_i
\]
is violated only by the local configuration
\[
(x_i,x_j)=(0,1),
\]
which is detected by
\begin{equation}
P_{ij}
=
x_j(1-x_i).
\end{equation}

Summing over all active dependency links gives the constraint energy
\begin{equation}
E_{\rm con}
=
\lambda
\sum_{A_{ij}>\theta}
A_{ij}
x_j(1-x_i),
\end{equation}
where $\lambda>0$ determines the penalty assigned to violations of the contagion dynamics.

To specify the initiating disturbance, the seed institution $s$ is fixed through the penalty
\begin{equation}
E_{\rm seed}
=
M(1-x_s),
\qquad
M\gg\lambda>w,
\end{equation}
which guarantees that the chosen seed remains in the failed state throughout the optimization.

Combining the reward, contagion, and seed contributions yields the complete QUBO objective,
\begin{equation}
E(x)
=
-w\sum_i x_i
+
\lambda
\sum_{A_{ij}>\theta}
A_{ij}
x_j(1-x_i)
+
M(1-x_s),
\label{eq:MCFP_QUBO}
\end{equation}
and the optimal cascade is obtained from
\begin{equation}
x^\star
=
\arg\min_{x\in\{0,1\}^n}
E(x).
\end{equation}

Expanding Eq.~(\ref{eq:MCFP_QUBO}) gives the standard quadratic form
\begin{equation}
E(x)=x^{\mathrm T}Qx+\mathrm{const},
\end{equation}
where the matrix $Q$ is determined by the reward coefficients, the dependency matrix, and the penalty parameters.

Introducing Ising spin variables \cite{lucas2014ising}
\begin{equation}
x_i=\frac{1+s_i}{2},
\qquad
s_i\in\{-1,+1\},
\end{equation}
transforms the QUBO directly into an Ising Hamiltonian. Within this representation, the reward term acts as an effective external field favouring failure, while the quadratic couplings enforce the logical contagion constraints encoded by the financial dependency graph. Consequently, determining the largest self-consistent cascade becomes equivalent to finding the ground state of an interacting Ising system, allowing the optimization to be performed using simulated annealing, simulated quantum annealing, parallel-tempered annealing, or quantum annealing hardware.

\section{Optimal Bailout Allocation as Controlled Ising Optimization}
\label{sec:obap-ising}

The Maximum Cascade Failure Problem identifies the most severe self-consistent failure configuration permitted by the financial network. From the perspective of financial regulation, however, the objective is fundamentally different. Rather than maximizing systemic collapse, the regulator seeks the smallest intervention capable of preventing large-scale contagion~\cite{acharya2007many} while operating under limited financial resources.~\cite{capponi2022optimal,jackson2024credit} This leads naturally to the Optimal Bailout Allocation Problem (OBAP). \cite{dasaratha2024diversified}

To formulate the problem, we associate with each institution a binary bailout variable
\begin{equation}
b_i=
\begin{cases}
1,&\text{institution $i$ is rescued},\\
0,&\text{otherwise},
\end{cases}
\qquad
i=1,\ldots,n.
\end{equation}
Each bailout incurs a financial cost
\begin{equation}
\kappa_i>0,
\end{equation}
so that the total intervention expenditure is
\begin{equation}
C(b)
=
\sum_{i=1}^{n}\kappa_i b_i.
\label{eq:bailout_cost}
\end{equation}

To describe the collective dynamics of failures and recoveries, we employ the Ising representation introduced in the previous section. The state of institution $i$ is represented by the spin variable
\begin{equation}
s_i=
\begin{cases}
+1,&\text{failed},\\
-1,&\text{surviving}.
\end{cases}
\label{eq:spin_definition}
\end{equation}

In the absence of regulatory intervention the financial network is described by the effective Ising Hamiltonian
\begin{equation}
H_0(s)
=
-\sum_{i<j}J_{ij}s_is_j
-
\sum_{i=1}^{n}h_i s_i,
\label{eq:bare_hamiltonian}
\end{equation}
where the couplings $J_{ij}$ describe the propagation of contagion through the dependency network and the local fields $h_i$ quantify the intrinsic financial vulnerability of individual institutions.

A bailout acts as an external stabilizing field that favours the surviving state. The controlled Hamiltonian therefore becomes
\begin{equation}
H_{\mathrm{ctrl}}(s,b)
=
-\sum_{i<j}J_{ij}s_is_j
-
\sum_{i=1}^{n}
\left(
h_i-\Omega_i b_i
\right)s_i,
\label{eq:controlled_hamiltonian}
\end{equation}
where
\begin{equation}
\Omega_i>0
\end{equation}
denotes the stabilizing strength of the bailout applied to institution $i$.

Equation~(\ref{eq:controlled_hamiltonian}) has a transparent physical interpretation. The interaction term promotes correlated failures through financial contagion, whereas the bailout field locally biases selected institutions toward the surviving state. Systemic stabilization therefore emerges through a competition between collective contagion and targeted regulatory intervention.

The following proposition establishes a sufficient condition under which a bailout guarantees the survival of a rescued institution.

\begin{proposition}[Bailout pinning]
If
\begin{equation}
\Omega_i
>
|h_i|
+
\sum_{j\neq i}|J_{ij}|,
\label{eq:pinning_condition}
\end{equation}
then institution $i$ is pinned to the surviving state
\begin{equation}
s_i=-1
\end{equation}
in the ground state of the controlled Hamiltonian whenever
\begin{equation}
b_i=1.
\end{equation}
\end{proposition}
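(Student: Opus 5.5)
The argument is a single-spin-flip comparison. Fix $b$ with $b_i=1$ and let $s^\star$ be any ground state of $H_{\mathrm{ctrl}}(\cdot,b)$ from Eq.~(\ref{eq:controlled_hamiltonian}). I will suppose, for contradiction, that $s^\star_i=+1$. Let $s'$ be the configuration obtained from $s^\star$ by flipping only spin $i$, so $s'_i=-1$ and $s'_j=s^\star_j$ for $j\neq i$. If $H_{\mathrm{ctrl}}(s',b)<H_{\mathrm{ctrl}}(s^\star,b)$, then $s^\star$ is not a ground state, and so every ground state has $s_i=-1$.

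First, I isolate the terms that depend on $s_i$. Treating the couplings as symmetric, with $J_{ij}=J_{ji}$ for the unordered pair $i<j$, these terms are
\[
-s_i\Bigl(\sum_{j\neq i}J_{ij}s_j+h_i-\Omega_i\Bigr)\equiv -s_i\,\ell_i(s),
\]
where the effective local field $\ell_i(s)$ does not involve $s_i$. Flipping $s_i$ from $+1$ to $-1$ therefore changes the energy by
\[
\Delta E=H_{\mathrm{ctrl}}(s',b)-H_{\mathrm{ctrl}}(s^\star,b)=2\,\ell_i(s^\star).
\]

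Next, I bound this change using $|s_j|=1$:
\[
\ell_i(s^\star)\le \sum_{j\neq i}|J_{ij}|+|h_i|-\Omega_i.
\]
By the pinning condition (\ref{eq:pinning_condition}), the right-hand side is strictly negative. Hence $\Delta E<0$, which is the required contradiction.

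The strict inequality in (\ref{eq:pinning_condition}) is what rules out ties, so the conclusion holds for every ground state and not merely for some. In fact the bound holds uniformly over all configurations of the remaining spins, so the same argument shows something stronger: $s_i=-1$ is favoured in every configuration, and it also holds at any local minimum under single-spin flips.

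The only real obstacle is bookkeeping. I need the sign conventions right: $s_i=+1$ means failed, and the bailout enters as $-\Omega_i b_i$ inside $-(\cdot)s_i$, which gives the energy term $+\Omega_i s_i$ that penalises failure. I also need the factor for the pairwise sum $\sum_{i<j}$ handled correctly, so that each coupling involving $i$ appears exactly once in $\ell_i$. That in turn requires the convention $J_{ij}=J_{ji}$, which I will state explicitly.
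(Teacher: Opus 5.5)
Your proof is correct and is essentially the paper's argument. Both compare two configurations that differ only in $s_i$ and observe that the bailout field $\Omega_i$ exceeds the largest possible local field favouring failure, $|h_i|+\sum_{j\neq i}|J_{ij}|$, uniformly in the other spins. Your version makes explicit the energy difference $\Delta E=2\ell_i(s^\star)$, the symmetric-coupling convention $J_{ij}=J_{ji}$, and the role of the strict inequality, all of which the paper leaves implicit.
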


\begin{proof}
Consider two configurations differing only in the value of spin $s_i$, while all remaining spins are held fixed. The largest possible interaction energy favouring failure is bounded by
\[
|h_i|+\sum_{j\neq i}|J_{ij}|.
\]
If the bailout field exceeds this bound, the energy of the surviving configuration remains lower than that of the failed configuration irrespective of the neighbouring spins. Consequently the ground state necessarily satisfies $s_i=-1$.
\end{proof}

The regulator must determine the subset of institutions that should receive financial assistance while respecting the available bailout budget. This naturally leads to a joint optimization over both the failure variables and the bailout variables.

Let
\begin{equation}
x=(x_1,\ldots,x_n)
\end{equation}
denote the binary failure variables and
\begin{equation}
b=(b_1,\ldots,b_n)
\end{equation}
the bailout variables. Suppose that $K$ institutions may be rescued,
\begin{equation}
\sum_{i=1}^{n}b_i = K.
\label{eq:budget_constraint}
\end{equation}

Using a quadratic penalty to enforce the budget constraint, the Optimal Bailout Allocation Problem is formulated as the joint QUBO
\begin{equation}
L(x,b)
=
\kappa^{\mathrm T}b
+
\lambda_1 x^{\mathrm T}Q(b)x
+
\lambda_2
\left(
\sum_i b_i-K
\right)^2,
\label{eq:joint_objective}
\end{equation}
where the three terms respectively represent

\begin{enumerate}
\item the total bailout expenditure,
\item the equilibrium cascade energy under the chosen intervention,
\item the quadratic penalty enforcing the bailout budget.
\end{enumerate}

The optimal intervention strategy is obtained by solving
\begin{equation}
(x^\ast,b^\ast)
=
\arg\min_{x,b}
L(x,b),
\label{eq:joint_problem}
\end{equation}
with
\[
x,b\in\{0,1\}^n.
\]

Unlike the Maximum Cascade Failure Problem, which searches for the most destructive equilibrium configuration, the joint QUBO simultaneously determines the least costly intervention strategy together with the resulting equilibrium failure state. The optimization therefore balances two competing objectives: minimizing public expenditure while maximizing systemic stability.

The formulation provides a unified optimization framework for regulatory intervention and can be solved using the same Ising-based algorithms employed for the cascade optimization problem, including simulated annealing, simulated quantum annealing, parallel-tempered annealing, and quantum annealing hardware. As demonstrated in the numerical results of Section~\ref{sec:results_discussion}, the resulting optimal bailout policies suppress contagion by strategically stabilizing a relatively small number of institutions, thereby interrupting the propagation of systemic failures throughout the financial network.

\section{Bailout Susceptibility and Greedy Intervention}
\label{sec:susceptibility}

One of the principal advantages of the Ising formulation is that it immediately enables the application of response theory. In statistical mechanics, linear response is characterized by the susceptibility tensor, which quantifies the change in equilibrium spin expectation values under infinitesimal external perturbations.~\cite{kubo1957statistical} Under the present mapping, bailout incentives act as local external fields, naturally leading to an analogous financial susceptibility.

The joint QUBO formulation developed in the previous section yields an optimal bailout strategy by solving a global combinatorial optimization problem. Although this approach provides the minimum-cost intervention, solving the full QUBO may become computationally demanding for very large financial networks. It is therefore desirable to identify local quantities that quantify the influence of individual institutions on the stability of the entire system and can guide efficient approximate intervention strategies.

In statistical physics, the response of a system to an external perturbation is characterized by its susceptibility. Motivated by this idea, we introduce a bailout susceptibility that measures how strongly the stability of one institution is affected by a small stabilizing intervention applied to another.

Within the Ising representation, let
\begin{equation}
\sigma_i^{z}
=
\begin{cases}
+1,&\text{failed},\\
-1,&\text{surviving},
\end{cases}
\end{equation}
denote the spin operator associated with institution $i$. The expectation value
\begin{equation}
\langle\sigma_i^z\rangle
\end{equation}
therefore measures the average failure tendency of institution $i$.
\begin{definition}[Bailout susceptibility]

For an equilibrium Ising system, the fluctuation--dissipation theorem relates the linear response of the system to its equilibrium fluctuations. We therefore define the bailout susceptibility by

\begin{equation}
\chi_{ij}
=
\beta_{\rm eq}
\left(
\langle
\sigma_i^z\sigma_j^z
\rangle
-
\langle
\sigma_i^z
\rangle
\langle
\sigma_j^z
\rangle
\right),
\label{eq:bailout_susceptibility}
\end{equation}

where $\beta_{\rm eq}=1/(k_BT_{\rm eq})$ denotes the inverse equilibrium temperature.

\end{definition}

Equation~(\ref{eq:bailout_susceptibility}) provides the operational definition used throughout this work. All susceptibilities reported in Section~\ref{sec:results_discussion} are computed directly from equilibrium Metropolis Monte Carlo samples using the fluctuation--dissipation theorem.

Positive values of $\chi_{ij}$ indicate that stabilizing institution $j$ reduces the equilibrium failure tendency of institution $i$, whereas values close to zero imply weak dynamical coupling between the two institutions.

To quantify the overall influence of a rescued institution, we define its aggregate susceptibility
\begin{equation}
\mu_j
=
\frac1n
\sum_{i=1}^{n}
\chi_{ij}.
\label{eq:aggregate_susceptibility}
\end{equation}

The quantity $\mu_j$ measures the average reduction in systemic failure produced by rescuing institution $j$ and therefore provides a dynamical measure of systemic importance. Unlike purely topological centrality measures, $\mu_j$ incorporates both the network connectivity and the collective contagion dynamics encoded by the Ising model.

To characterize the overall health of the financial system, we introduce the average magnetization
\begin{equation}
m(b)
=
\frac1n
\sum_{i=1}^{n}
\langle\sigma_i^z\rangle_b,
\label{eq:magnetization}
\end{equation}
which satisfies
\[
-1\le m(b)\le1.
\]

Positive magnetization corresponds to a failure-dominated phase, whereas negative magnetization indicates that most institutions remain solvent. The objective of regulatory intervention is therefore to minimize the network magnetization.

The sensitivity of the global financial system to a bailout applied to institution $j$ is obtained directly from Eq.~(\ref{eq:magnetization}),
\begin{equation}
-
\frac{\partial m}{\partial\Omega_j}
=
\frac1n
\sum_i\chi_{ij}
=
\mu_j,
\end{equation}
showing that aggregate susceptibility measures the marginal reduction in systemic distress produced by rescuing institution $j$.

When bailout costs differ among institutions, it is natural to compare systemic benefit with intervention cost. We therefore define the efficiency ratio
\begin{equation}
\rho_j
=
\frac{\mu_j}{\kappa_j},
\label{eq:efficiency_ratio}
\end{equation}
which measures the expected stabilization achieved per unit bailout cost.

The efficiency ratio leads naturally to a scalable approximation algorithm.

\begin{definition}[Susceptibility-driven greedy intervention]
Suppose that the regulator may rescue at most $K$ institutions. Starting from an empty bailout set,
\[
S=\varnothing,
\]
the intervention strategy proceeds iteratively by selecting
\begin{equation}
j^\ast
=
\arg\max_{j\notin S}
\frac{\mu_j}{\kappa_j},
\end{equation}
adding $j^\ast$ to the bailout set, recomputing the susceptibilities, and repeating until $|S|=K$.
\end{definition}\cite{nemhauser1978analysis,
kempe2003maximizing,
kitsak2010identification}

The greedy algorithm requires only local response information and avoids solving the full joint QUBO at every iteration, making it attractive for large-scale financial networks.

The quality of the greedy solution can be characterized under standard assumptions from combinatorial optimization. Let
\begin{equation}
f(S)
=
-m(S)
\end{equation}
denote the reduction in systemic failure achieved by rescuing the institution set $S$. If $f(S)$ is monotone and submodular, then the classical theorem of Nemhauser, Wolsey, and Fisher \cite{nemhauser1978analysis} implies
\begin{equation}
f(S_{\rm greedy})
\ge
\left(1-\frac1e\right)
f(S_{\rm opt}),
\label{eq:approximation_bound}
\end{equation}
where $S_{\rm opt}$ denotes the optimal bailout set satisfying the same budget constraint.

The approximation guarantee is conditional upon the monotonicity and submodularity of the cascade-reduction function and therefore need not hold for arbitrary financial networks. Nevertheless, many contagion processes exhibit diminishing marginal returns, making the susceptibility-driven greedy strategy a practical approximation when solving the full joint QUBO is computationally prohibitive.

The susceptibility framework therefore complements the exact optimization developed in the previous section. The joint QUBO yields globally optimal bailout policies, while the susceptibility provides a physically motivated measure of systemic importance and forms the basis of scalable response-based intervention strategies. As demonstrated in Section~\ref{sec:results_discussion}, the susceptibility matrix offers direct insight into the pathways through which stabilization propagates across the financial network.

\section{Results and Discussion}
\label{sec:results_discussion}

In this section, we present numerical results that validate the theoretical framework developed in the preceding sections. The simulations follow the same progression as the mathematical formulation: we first examine equilibrium valuation and the emergence of threshold-induced failures, then investigate the Maximum Cascade Failure Problem through its QUBO formulation, followed by the optimization of bailout strategies using the controlled Ising model, and finally analyse the bailout susceptibility that quantifies the systemic importance of individual institutions.

The results are organized according to these four components. For each problem, we discuss the corresponding numerical solution together with its physical interpretation and relate it to the appropriate figure. Collectively, the simulations demonstrate how equilibrium valuation, financial contagion, worst-case cascade analysis, and optimal regulatory intervention can all be formulated within a unified Ising--QUBO framework and solved using annealing-based optimization methods.

\subsection{Computational Framework}

All numerical optimization experiments presented in this work were performed
using \textsc{QAnneal}~\cite{qanneal2026}, an in-house computational framework developed by the
authors for solving Ising Hamiltonians and Quadratic Unconstrained Binary
Optimization (QUBO) problems. The framework provides a unified implementation~\cite{ding2019towards}
of several annealing paradigms, including Simulated Annealing (SA), \cite{lucas2014ising,
glover2019,
johnson2011quantum}
Simulated Quantum Annealing (SQA), Simulated Quantum Annealing with Parallel
Tempering (SQAPT), and Continuous-Time Path-Integral Monte Carlo (CT-PIMC), \cite{santoro2002theory}
allowing the same optimization problem to be investigated using both classical
and quantum-inspired annealing dynamics.\cite{kadowaki1998quantum}

In the present work, \textsc{QAnneal} serves exclusively as the numerical
backend for solving the QUBO formulations derived from the financial contagion
model. The software itself is independent of the financial application
considered here and is designed as a general-purpose framework for Ising and
QUBO optimization. A detailed description of its architecture, implementation,
algorithmic design, and benchmarking against existing optimization frameworks
will be presented in a separate software publication.

Unless otherwise stated, all optimization results reported in this paper were
obtained using the \textsc{QAnneal} framework.

An alpha release of implementation is available at PyPi \cite{qanneal2026}. 

\subsection{Equilibrium market valuation}

We begin by validating the equilibrium valuation framework developed in Secs.~II and III. The nonlinear fixed-point equation,
\begin{equation}
v=A\left(Dp-b(v)\right),
\end{equation}
was solved for a synthetic financial network consisting of $n=20$ interconnected institutions holding $m=5$ primitive assets. The market values were represented using a binary expansion of order $q=2$, while the discontinuous Heaviside failure function was approximated by a third-order Legendre polynomial to obtain a QUBO-compatible formulation.

Figure~\ref{fig:market_equ} compares, for each institution, the initial market value $v_i^{(0)}$, the critical solvency threshold $v_i^c$, and the equilibrium market value $v_i^{*}$ obtained after solving the nonlinear valuation problem. The institutions are grouped into three tiers mega-banks, regional institutions, and peripheral institutions illustrating the heterogeneous structure of the financial network.

The equilibrium solution shows that all but one institution remain above their solvency thresholds. Institution~8 crosses the critical threshold,
\begin{equation}
v_8^{*}<v_8^{c},
\end{equation}
and is therefore identified as failed. The resulting equilibrium loss is approximately $9.69$, corresponding to about $12.5\%$ of the total initial market value.

This example illustrates the qualitative effect of introducing threshold-induced failures into the valuation model. In the absence of the nonlinear penalty term, the equilibrium is obtained directly from the linear relation $v=ADp$. Once failure thresholds are incorporated, however, the valuation problem becomes nonlinear and admits equilibrium failure configurations in which relatively modest changes in market values can trigger institution defaults. The equilibrium failure set obtained here provides the initial condition for the cascade optimization problems considered in the following sections.

\subsection{Maximum cascade failure}

We next investigate the Maximum Cascade Failure Problem (MCFP) using the QUBO formulation of Eq.~(\ref{eq:MCFP_QUBO}). The optimization was performed on a synthetic financial network containing $n=1000$ interconnected institutions using simulated quantum annealing (SQA). To verify the robustness of the solution, the optimization was independently repeated using simulated quantum annealing with parallel tempering (SQAPT).

Figure~\ref{fig:maxcasc} shows the worst-case cascade identified by the optimizer. The orange node denotes the initially shocked seed institution, while red and green nodes represent institutions that eventually fail or survive, respectively. Network edges indicate dependency relationships whose strengths exceed the threshold $\theta$, so that failure propagates only along sufficiently strong financial exposures.

The optimized configuration contains a self-consistent cascade involving $561$ of the $1000$ institutions, corresponding to more than half of the financial network, with a minimum QUBO energy of $-498.50$. The cascade propagates across all hierarchical levels of the network: all $10$ mega-banks and $89$ regional institutions fail, while $462$ of the $900$ peripheral institutions are also driven into insolvency. The remaining $438$ peripheral institutions survive because they are either weakly connected to the principal contagion cluster or lie outside the dominant failure pathways.

The independent SQAPT simulation converges to the same cascade size and nearly identical minimum energy, providing additional evidence that the reported solution is robust and not an artifact of a particular annealing schedule. These results demonstrate that the proposed QUBO formulation successfully identifies large self-consistent failure configurations without exhaustive enumeration over the exponentially large configuration space, illustrating the applicability of annealing-based optimization to worst-case systemic risk analysis.

\subsection{Optimal bailout allocation}

We now apply the joint QUBO formulation of the Optimal Bailout Allocation Problem introduced in Eq.~(\ref{eq:joint_objective}). Unlike the Maximum Cascade Failure Problem, which identifies the largest self-consistent failure configuration, the present optimization determines the minimum-cost intervention capable of suppressing systemic contagion under a prescribed bailout budget. The optimization simultaneously identifies both the institutions to be rescued and the resulting equilibrium state of the financial network.

The optimization mechanism is validated across financial networks ranging
from $20$ to $1000$ institutions. In each case, the joint QUBO objective is
minimized to simultaneously determine the equilibrium failure
configuration and the optimal bailout allocation under a fixed budget
constraint, rather than selecting institutions by size or degree alone.

For the $20$-institution network, the bailout drives the magnetization
from $m=+0.400$ to $m=-0.800$. For the $100$-institution hierarchical
network (core, intermediate, peripheral layers), it changes from
$m=+0.840$ to $m=-0.060$. For the $1000$-institution network, it changes
from $m=+0.092$ to $m=-0.482$, a net reduction $\Delta m=-0.574$.

The $n=100$ case is shown, in Figure~\ref{fig:bailout_100}: the left
panel shows the pre-intervention cascade, the center panel the optimal
bailout allocation, and the right panel the post-intervention equilibrium,
in which many institutions survive without direct assistance because
contagion paths to them are blocked. Across all three sizes, the same
mechanism builds efficient firebreaks that stabilize a much larger
fraction of the network than the number of institutions directly rescued. The corresponding state transitions are shown in Figure~\ref{fig:sankey}.
 
Across all three system sizes, the same joint QUBO consistently constructs efficient ``firebreaks'' that interrupt contagion pathways, allowing relatively few directly rescued institutions to stabilize a much larger fraction of the financial network. These results demonstrate that the proposed optimization framework captures not only the mechanism of optimal intervention but also its robustness and scalability across increasingly complex financial systems.

\subsection{Annealing dynamics and optimization landscape}

The QUBO formulations developed in this work define rugged optimization landscapes containing numerous local minima separated by energy barriers. Such landscapes are typical of large combinatorial optimization problems and present a significant challenge for deterministic optimization algorithms, which can easily become trapped in metastable configurations. Simulated annealing provides a stochastic optimization strategy that mitigates this difficulty by combining thermal exploration with gradual cooling. \cite{kirkpatrick1983optimization}

Figure~\ref{fig:simanneal} illustrates this optimization process. The left panel shows a representative energy landscape, where darker regions correspond to lower objective values and the yellow marker indicates the global minimum. The multiple valleys visible in the landscape highlight the existence of competing local minima that characterize Ising-QUBO optimization problems.

The right panel shows a typical annealing trajectory generated using the Metropolis algorithm. The optimization begins from a random initial configuration (cyan). During the early stages of the annealing schedule, when the temperature is high, the algorithm accepts not only energy-lowering moves but also a finite fraction of energy-increasing moves with probability \cite{metropolis1953equation}

\begin{equation}
P_{\rm acc}
=
\min\!\left(1,e^{-\Delta E/T}\right).
\end{equation}

These thermal fluctuations enable the trajectory to cross energy barriers and explore different regions of the configuration space rather than becoming trapped in the first local minimum encountered.

As the temperature is gradually reduced, the acceptance probability for uphill moves decreases exponentially. Consequently, most proposed moves are rejected (red), while only energetically favourable moves continue to be accepted (green). The optimization therefore evolves from an exploratory search at high temperature to a highly selective local search at low temperature, ultimately converging toward a low-energy basin close to the global optimum.

The same annealing strategy is employed throughout this work to solve the QUBO formulations associated with equilibrium valuation, maximum cascade failure, and optimal bailout allocation. Although the financial optimization problems involve substantially larger and more complex energy landscapes than the illustrative example shown here, the underlying optimization principle remains identical: controlled thermal fluctuations permit efficient exploration of the combinatorial search space before the system progressively freezes into a near-optimal solution.

\subsection{Bailout susceptibility and emergent systemic importance}

The previous results demonstrated that the joint QUBO formulation identifies targeted bailout policies capable of suppressing large financial cascades. An equally important question, however, is \emph{why} certain institutions are consistently selected by the optimization. The answer is provided by the bailout susceptibility introduced in Sec.~\ref{sec:susceptibility}, which measures the response of the equilibrium state to an infinitesimal stabilizing intervention.

Figure~\ref{fig:susceptibility_matrices} illustrates the complete progression from the original financial network to its physical response properties. The panels represent successive levels of description:
\[
A
\;\longrightarrow\;
J
\;\longrightarrow\;
\chi,
\]
namely the financial dependency matrix, and the equilibrium susceptibility matrix. This sequence provides the conceptual bridge connecting financial contagion with statistical mechanics.

The left panel shows the dependency matrix $A=\hat C(I-C)^{-1}$ introduced in Eq.~(\ref{eq:dependency_matrix}). The block structure reflects the hierarchical organization of the synthetic financial system into core, intermediate, and peripheral institutions. Large matrix elements correspond to strong financial dependence generated through both direct and indirect ownership chains. While this matrix fully characterizes the equilibrium valuation problem, it contains only structural information describing how institutions are connected; it does not reveal how perturbations propagate once failures occur.

The centre panel displays the effective Ising coupling matrix
\[
J_{ij}
=
\frac{\lambda}{4}
(A_{ij}+A_{ji}),
\]
obtained from the QUBO formulation after symmetrization. The mapping preserves the hierarchical organization already present in the financial network while transforming the contagion problem into an interacting spin system suitable for annealing-based optimization. Strong couplings remain concentrated among the highly interconnected core institutions, whereas peripheral institutions interact much more weakly. Consequently, the Ising Hamiltonian faithfully captures the dominant pathways through which financial distress propagates across the network.

The right panel presents the bailout susceptibility matrix
\begin{equation*}
\chi_{ij} = \beta_{\rm eq}\left(\langle\sigma_i^z\sigma_j^z\rangle - \langle\sigma_i^z\rangle\langle\sigma_j^z\rangle\right),
\end{equation*}
evaluated numerically from equilibrium Metropolis Monte Carlo samples via  Eq.~(\ref{eq:bailout_susceptibility}). Unlike the previous two matrices, the susceptibility is not an input to the model but an emergent equilibrium response function. Each matrix element quantifies how strongly the failure tendency of institution $i$ is reduced by strengthening the bailout field acting on institution $j$. Thus, $\chi$ measures the propagation of stabilizing interventions through the collective dynamics of the financial system.

Several features are immediately apparent. The prominent upper-left block indicates that interventions applied to the core institutions produce the strongest collective response, demonstrating that these institutions form a tightly coupled dynamical subsystem. By contrast, the weaker lower-right block shows that interventions directed solely toward peripheral institutions generate comparatively little systemic benefit. The off-diagonal blocks reveal that stabilizing selected core institutions also reduces the failure tendency of institutions outside the core, illustrating how bailout benefits propagate beyond the directly rescued institutions.

This figure also provides the physical justification for the susceptibility-driven bailout strategy developed in the previous section. The aggregate susceptibility,
\[
\mu_j
=
\frac{1}{n}
\sum_i
\chi_{ij},
\]
measures the total stabilizing influence of institution $j$ on the entire financial network. Institutions with large values of $\mu_j$ produce the greatest reduction in systemic risk per intervention and therefore naturally emerge as preferred bailout candidates. Unlike conventional centrality measures that depend only on network topology, the susceptibility incorporates both the interaction structure and the equilibrium collective dynamics of the system. It therefore represents a genuinely dynamical measure of systemic importance.

Taken together, Fig.~\ref{fig:susceptibility_matrices} summarizes the central conceptual contribution of this work. The dependency matrix describes the financial architecture, the Ising coupling matrix translates that architecture into an optimization model, and the susceptibility matrix reveals how interventions propagate through the resulting collective dynamics. This progression,
\[
A
\rightarrow
J
\rightarrow
\chi,
\]
provides a unified framework linking financial networks, statistical mechanics, and optimization-based systemic risk management. Figure~\ref{fig:susceptibility_smoothened} presents a smoothened, continuous rendering of this susceptibility landscape, further highlighting the concentration of systemic response around the core institutions.

\begin{figure*}[p]
    \centering
    \includegraphics[width=0.85\textwidth]{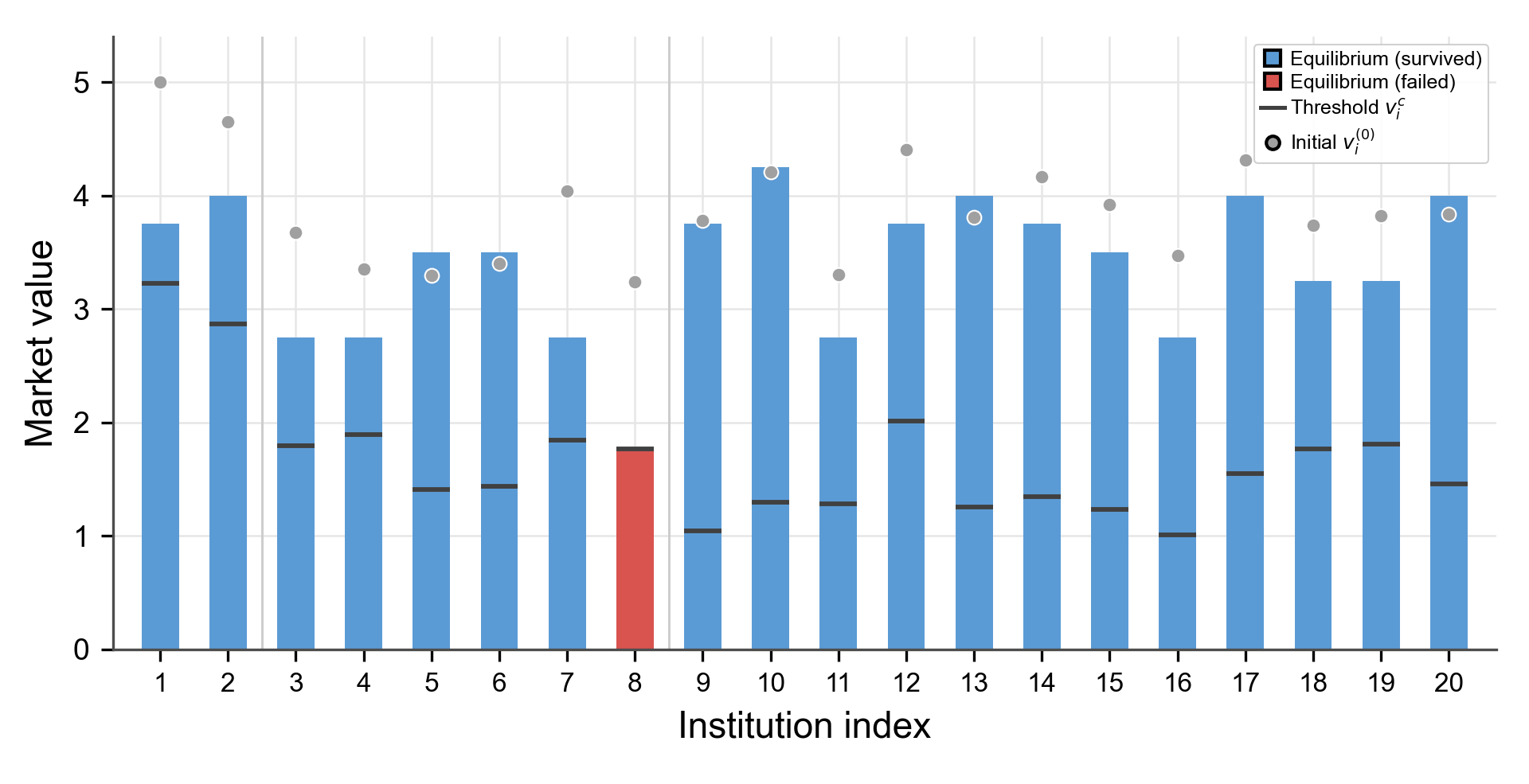}
    \caption{
Equilibrium market valuation for a synthetic financial network with $n=20$ institutions and $m=5$ primitive assets. For each institution, the initial market value $v_i^{(0)}$, the critical solvency threshold $v_i^{c}$, and the equilibrium market value $v_i^{*}$ obtained from the nonlinear fixed-point equation are shown. Institutions are grouped into mega-bank, regional, and peripheral tiers. Institution~8 is the only institution for which $v_i^{*}<v_i^{c}$, indicating equilibrium failure. The total equilibrium loss is approximately $9.69$ ($12.5\%$ of the initial market value), demonstrating how threshold-induced nonlinearities can generate endogenous failures even under relatively small valuation shocks.
}
    \label{fig:market_equ}
\end{figure*}

\begin{figure*}
    \centering
    \includegraphics[width=1\linewidth]{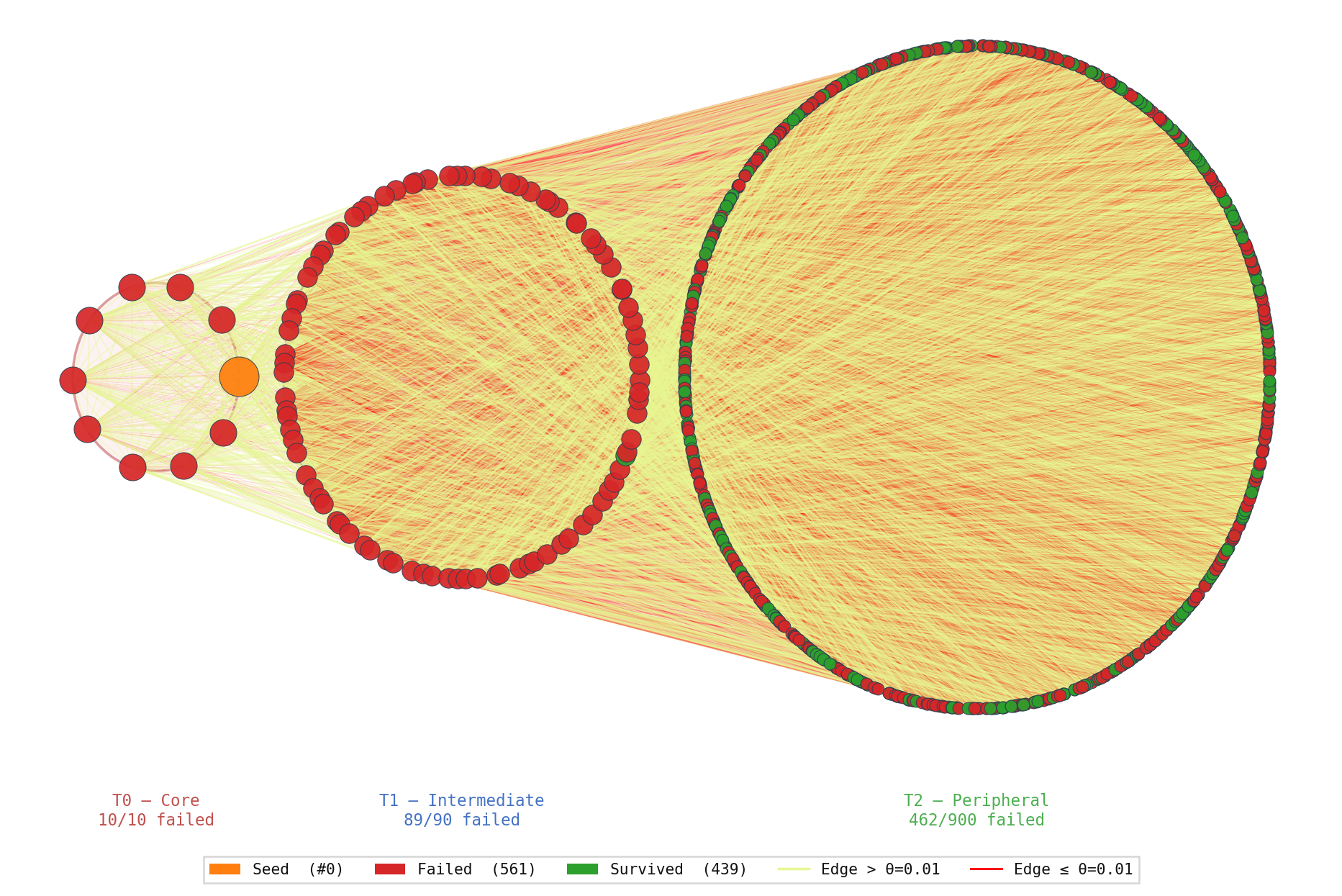}
    \caption{Worst-case cascade obtained by solving the Maximum Cascade Failure Problem for a synthetic financial network containing $n=1000$ institutions using simulated quantum annealing (SQA). The orange node denotes the initially shocked seed institution. Red nodes represent institutions belonging to the optimized cascade, while green nodes remain solvent. Edges correspond to dependency links with strength exceeding the contagion threshold $\theta$. The optimized cascade contains $561$ failed institutions at a minimum QUBO energy of $-498.50$, including all $10$ mega-banks, $89$ regional institutions, and $462$ of the $900$ peripheral institutions. The result illustrates the emergence of a large, self-consistent systemic failure cluster generated from a localized initial shock.}
    \label{fig:maxcasc}
\end{figure*}

\begin{figure*}
    \centering
    \includegraphics[width=0.80\textwidth]{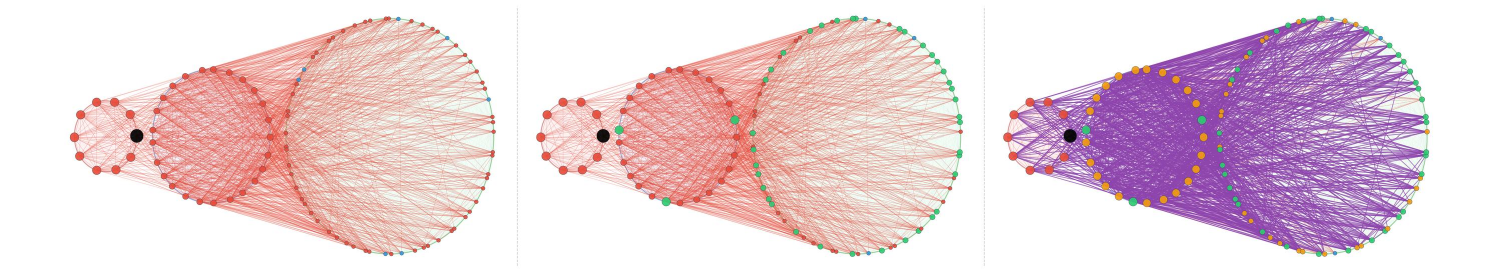}
    \vspace{0.1 cm}
    \includegraphics[width=0.85\textwidth]{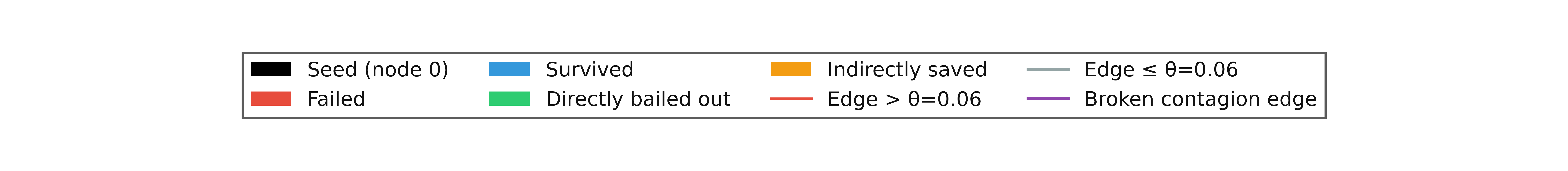}
    \caption{
    Joint-QUBO bailout optimization for a hierarchical financial network containing $100$ institutions organized into core ($10$ nodes), intermediate ($25$ nodes), and peripheral ($65$ nodes) layers. \textit{Left:} equilibrium cascade before intervention ($m=+0.840$). \textit{Centre:} optimal bailout configuration obtained under the prescribed budget constraint. \textit{Right:} equilibrium after intervention. The optimized bailout blocks contagion between network layers, producing numerous indirectly protected institutions (orange) and reducing the system magnetization to $m=-0.060$.
    }
    \label{fig:bailout_100}
\end{figure*}

\begin{figure*}
    \centering
    \includegraphics[width=0.8\textwidth]{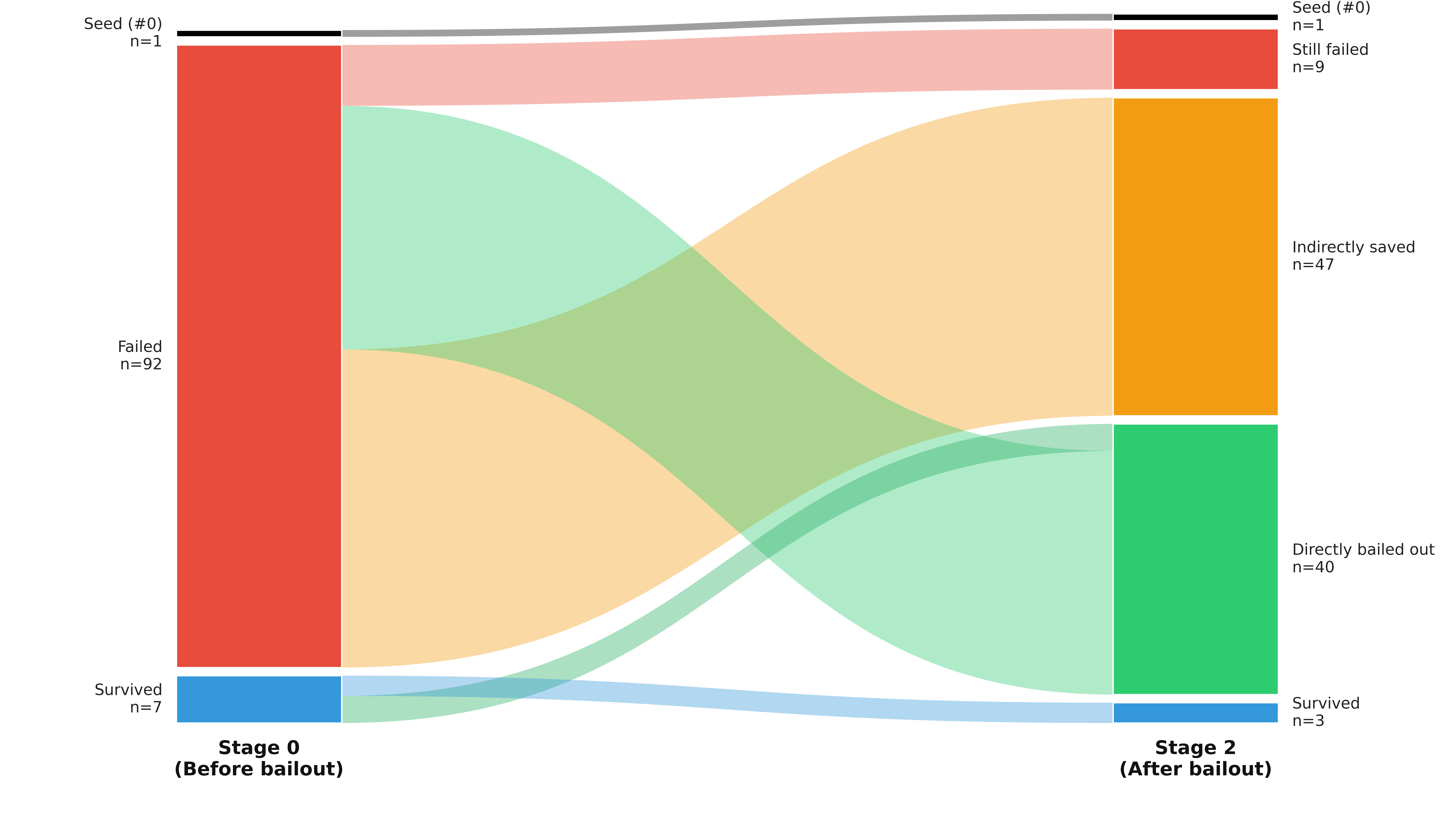}
    \caption{
Node-state transition induced by the optimal bailout strategy for a financial network containing $100$ institutions. The Sankey diagram tracks the evolution of institution states from the equilibrium cascade before intervention (left) to the equilibrium obtained after solving the joint QUBO bailout optimization problem (right). Initially, the network contains one externally shocked seed institution, $92$ failed institutions, and $7$ surviving institutions. Following the optimized intervention, $40$ institutions receive direct bailouts, leading to the indirect stabilization of an additional $47$ institutions through the suppression of contagion pathways. Only $9$ institutions remain failed after intervention, while the initially shocked seed remains failed by construction. The figure illustrates that the optimized bailout strategy protects a substantially larger fraction of the financial system than the number of institutions directly rescued, demonstrating the collective nature of systemic stabilization achieved through strategically targeted interventions.
}
    \label{fig:sankey}
\end{figure*}

\begin{figure*}
    \centering
    \includegraphics[width=0.8\linewidth]{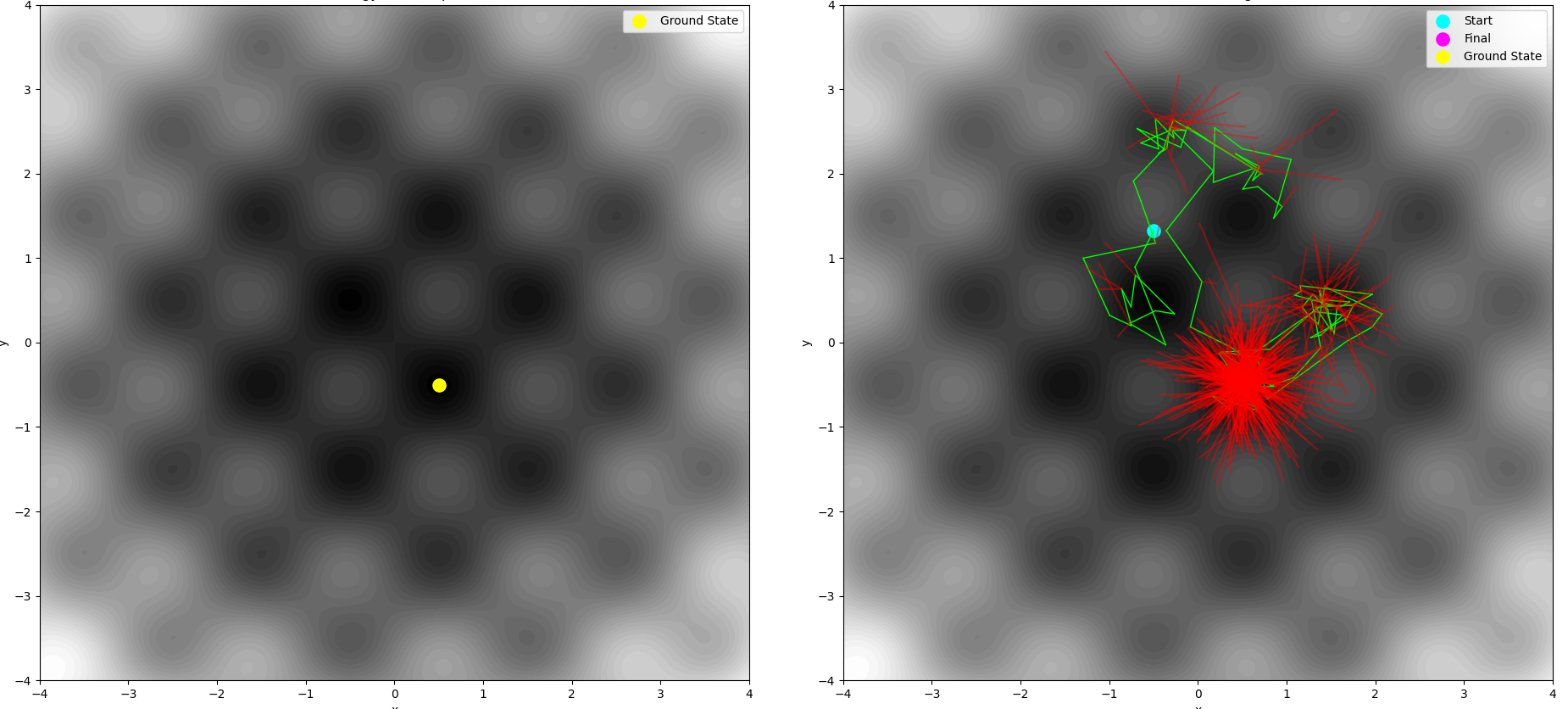}
    \caption{
Illustration of simulated annealing on a representative rugged energy landscape. \textit{Left:} Two-dimensional energy landscape containing multiple local minima separated by energy barriers. The yellow marker denotes the global minimum (ground state). \textit{Right:} Representative annealing trajectory generated using the Metropolis algorithm. The cyan marker indicates the initial random configuration, while the magenta marker denotes the final configuration obtained after annealing. Green trajectory segments correspond to accepted Monte Carlo moves, whereas red segments represent rejected trial moves. At high temperatures, occasional uphill moves enable the system to escape local minima and explore the energy landscape. As the temperature decreases, the acceptance of energetically unfavorable moves becomes increasingly suppressed, causing the trajectory to converge toward the low-energy basin surrounding the global minimum. This figure illustrates the optimization principle underlying the annealing-based solution of the Ising--QUBO formulations developed in this work.
}
    \label{fig:simanneal}
\end{figure*}

\begin{figure*}[p]
    \centering
    \includegraphics[width=1\textwidth]{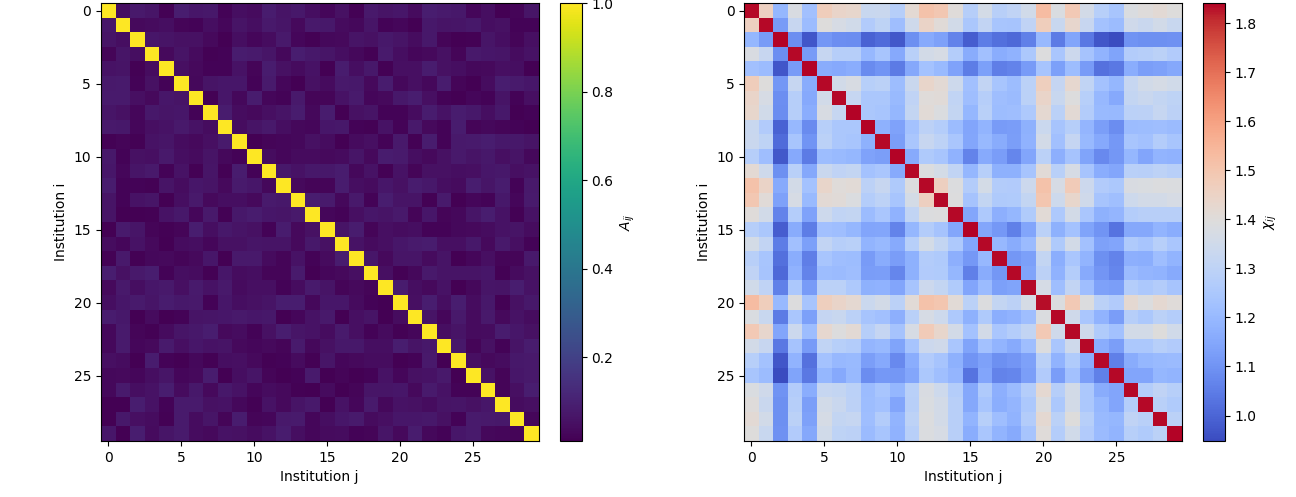}
    \caption{
    Progression from the financial network representation to the equilibrium response of the Ising model for a synthetic $30$-institution core - intermediate - periphery financial network. \textit{Left:} dependency matrix $A=\hat C(I-C)^{-1}$ describing the propagation of market value through direct and indirect ownership relations. \textit{Right:} equilibrium bailout susceptibility matrix,
    $\chi=\beta_{\mathrm{eq}}\mathrm{Cov}(s)$,
    computed from equilibrium Metropolis Monte Carlo simulations at $\beta_{\mathrm{eq}}=1.66$. The pronounced response within the core block demonstrates that interventions applied to highly interconnected institutions generate the strongest system-wide stabilization, while weaker responses in the peripheral block indicate limited global influence. The susceptibility therefore provides a dynamical measure of systemic importance and motivates the susceptibility-driven bailout strategy developed in this work.
    }
    \label{fig:susceptibility_matrices}
\end{figure*}

\begin{figure*}
    \centering
    \includegraphics[width=1\linewidth]{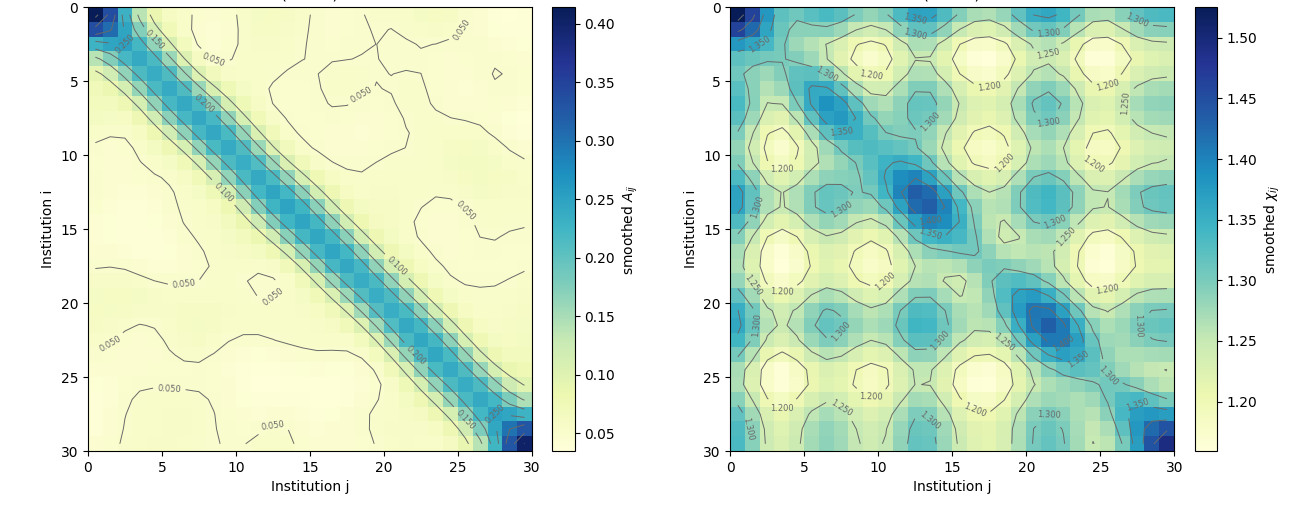}
    \caption{Smoothened rendering of the equilibrium bailout susceptibility
matrix $\chi_{ij}$, interpolated across institution indices. Shows a continuous distribution of susceptibility among different tiers for a given bailout field.}

    \label{fig:susceptibility_smoothened}
\end{figure*}

\section{Conclusions}

In this work, we developed a unified theoretical and computational framework for analysing systemic risk in interconnected financial systems using the language of Ising models and Quadratic Unconstrained Binary Optimization (QUBO). Starting from the financial network model of Elliott, Golub, and Jackson, we showed that equilibrium valuation, cascading financial contagion, worst-case cascade analysis, and optimal regulatory intervention can all be formulated within a common optimization framework. This unification establishes a direct connection between systemic risk analysis, statistical mechanics, and modern optimization techniques, including both classical and quantum annealing.

The starting point of the framework is the equilibrium valuation of financial institutions possessing both primitive assets and cross-holdings in other institutions. The resulting dependency matrix,
\[
A=\hat{C}(I-C)^{-1},
\]

provides a compact description of how value propagates through direct and indirect ownership chains. Introducing solvency thresholds and failure penalties transforms the linear valuation problem into a nonlinear fixed-point problem whose solutions exhibit cascading contagion and amplification of financial distress. This formulation provides a transparent mathematical description of how relatively small external shocks may propagate through ownership networks to generate systemic failures.

Building upon this nonlinear contagion model, we formulated the Maximum Cascade Failure Problem (MCFP), which seeks the largest self-consistent failure configuration compatible with the network dynamics. We further showed that the MCFP admits a natural QUBO representation, allowing the search for maximum cascades to be performed using Ising optimization methods instead of exhaustive enumeration.

We next considered the complementary regulatory problem of minimizing systemic collapse through limited financial intervention. By introducing bailout variables as control fields acting on the Ising Hamiltonian, the Optimal Bailout Allocation Problem was formulated as a controlled interacting spin system. The original bi-level optimization was subsequently transformed into a single joint QUBO involving both failure and bailout variables, enabling the simultaneous determination of equilibrium failure configurations and optimal intervention strategies under budget constraints. Numerical simulations on synthetic financial networks containing up to one thousand institutions demonstrated that strategically selected bailouts substantially suppress cascade propagation while requiring intervention only at a relatively small subset of institutions.

A further contribution of this work is the introduction of bailout susceptibility as a dynamical measure of systemic importance. Unlike conventional centrality measures that depend solely on network topology, the susceptibility matrix quantifies how stabilizing one institution influences the equilibrium failure tendencies of all others through collective interactions. This response function provides a physical interpretation of the optimization results and naturally motivates the susceptibility-driven greedy bailout strategy developed in this work. The progression from the dependency matrix to the effective Ising couplings and finally to the susceptibility matrix establishes a unified picture connecting financial architecture, collective dynamics, and optimal intervention.

Beyond the specific optimization problems considered here, the present framework illustrates how concepts originating in statistical mechanics can provide powerful tools for understanding complex financial systems. Equilibrium valuation determines the underlying financial architecture, nonlinear contagion captures the emergence of systemic crises, Ising optimization identifies both destructive and stabilizing configurations, and susceptibility analysis explains the propagation of regulatory interventions. These elements are not independent models but successive layers of a single theoretical framework for systemic risk analysis.

More broadly, the Ising representation establishes a general bridge between threshold financial networks and interacting spin systems. This connection enables the application of a wide range of theoretical and computational methods from condensed matter physics to systemic risk analysis, extending well beyond optimization alone.

Several directions remain open for future investigation. The present model may be extended to incorporate heterogeneous recovery rates, stochastic asset dynamics, time-dependent contagion processes, multilayer financial networks, and empirical ownership data. From the computational perspective, the QUBO formulations developed here provide a natural platform for hybrid classical--quantum optimization algorithms, variational quantum methods, and emerging quantum annealing hardware. As these technologies continue to mature, the framework presented in this work offers a promising route toward scalable stress testing, real-time cascade prediction, and optimization-based regulatory planning for increasingly interconnected financial systems.

\section{Acknowledgements}

The authors acknowledge support from the National Quantum Mission (NQM), an initiative of the Department of Science and Technology (DST), Government of India under the Project titled \emph{Q-LAT Anneal A - General-Purpose QUBO Compiler for Quantum Many-Body Physics and
Hybrid Optimisation}.
L.N. and A.T. were supported by the DAE Research Visitor Fellowship at The Institute of Mathematical Sciences (IMSc), Chennai. We also thank IMSc for providing HPC resources.
The numerical optimization results reported in this paper were obtained using the in-house \textsc{QAnneal} framework developed by the authors.

\bibliography{references}
\bibliographystyle{apsrev4-2}

\newpage

\appendix

\section{Joint QUBO Coefficients for Optimal Bailout Allocation}
\label{app:qb_derivation}

Here we make explicit the matrix $Q(b)$ appearing in the joint QUBO objective of
Eq.~(49). Substituting the spin--binary correspondence $s_i = 2x_i - 1$ into the
controlled Hamiltonian $H_{\rm ctrl}(s,b)$ of Eq.~(41) and collecting terms into the
quadratic form $x^{T}Q(b)\,x$ (dropping the resulting constant offset) gives the
coefficients
\begin{equation}
Q_{ij}(b) = -2J_{ij}, \qquad i \neq j,
\label{eq:qb_offdiag}
\end{equation}
\begin{equation}
Q_{ii}(b) = 2\sum_{j\neq i} J_{ij} \;-\; 2h_i \;+\; 2\Omega_i b_i.
\label{eq:qb_diag}
\end{equation}

Unlike the Maximum Cascade Failure Problem objective of Eq.~(32), $Q(b)$ contains no
free reward term: its diagonal is built entirely from the physically motivated
vulnerability field $h_i$, the contagion couplings $J_{ij}$, and the bailout field
$\Omega_i b_i$. Because these terms compete rather than uniformly favouring failure,
the ground state of $x^{T}Q(b)\,x$ represents a genuine equilibrium balance between
contagion and stabilization, rather than an unconstrained maximization of failures.

\section{Numerical Implementation Parameters}
\label{app:parameters}

All results reported in Section~\ref{sec:results_discussion} were obtained using the
QAnneal framework~\cite{qanneal2026}. Table~\ref{tab:mcfp_params} lists the parameters
used for the Maximum Cascade Failure Problem (Sec.~\ref{sec:mcfp-ising}),
Table~\ref{tab:obap_params} lists the parameters used for the Optimal Bailout
Allocation Problem (Sec.~\ref{sec:obap-ising}), and Table~\ref{tab:eq_params} lists the
parameters used for the equilibrium valuation experiment of
Sec.~\ref{sec:results_discussion}B.

Throughout this work the reward weight was fixed at $w=0$; the reported maximum
cascades are therefore the forced closure of the seed institution under the
contagion constraints of Eq.~(26), rather than the outcome of an explicit
size-maximizing reward term.

\begin{table}[h]
\centering
\caption{MCFP solver parameters.}
\label{tab:mcfp_params}
\begin{tabular}{lccc}
\hline\hline
Parameter & mcSA & mcSQA & mcSQAPT \\
\hline
Reads / samples          & 1000  & 30       & 10       \\
Sweeps                   & 10000 & --       & --       \\
Trotter slices           & --    & 32       & 16       \\
Replicas                 & --    & --       & 4        \\
PT steps                 & --    & --       & 25       \\
Swap interval            & --    & --       & 1        \\
Mode                     & --    & balanced & balanced \\
$\theta$                 & 0.05  & 0.01     & 0.05     \\
$\lambda$                & 50.0  & 100.0    & 100.0    \\
$w$                      & 0.0   & 0.0      & 0.0      \\
$M$                      & 500.0 & 500.0    & 500.0    \\
\hline\hline
\end{tabular}
\end{table}

\begin{table}[H]
\centering
\caption{OBAP solver parameters.}
\label{tab:obap_params}
\begin{tabular}{lcc}
\hline\hline
Parameter & obapSA & obapSQA \\
\hline
Reads / samples         & 1000  & 10       \\
Sweeps                  & 10000 & --       \\
Trotter slices          & --    & 32       \\
Mode                    & --    & balanced \\
$\theta$                & 0.01  & 0.01     \\
$\lambda$               & 50.0  & 50.0     \\
$M$                     & 500.0 & 500.0    \\
$\lambda_{\rm budget}$  & 200.0 & 200.0    \\
$K$                     & --    & 1000     \\
Seed institution        & 0     & 0        \\
\hline\hline
\end{tabular}
\end{table}

\begin{table}[H]
\centering
\caption{Equilibrium valuation solver parameters.}
\label{tab:eq_params}
\begin{tabular}{lc}
\hline\hline
Parameter & Value \\
\hline
$n$ (institutions)            & 20    \\
$m$ (primitive assets)        & 5     \\
$q$ (binary expansion order)  & 2     \\
Reads / samples               & 5000  \\
Sweeps                        & 50000 \\
\hline\hline
\end{tabular}
\end{table}

\end{document}